\newtheorem{theorem}{Theorem}[section]
\newtheorem{corollary}[theorem]{Corollary}
\newtheorem{lemma}[theorem]{Lemma}
\theoremstyle{definition}
\newtheorem{definition}[theorem]{Definition}
\theoremstyle{remark}
\newtheorem{remark}[theorem]{\sc Remark}
\newtheorem{example}[theorem]{\sc Example}
\renewcommand{\int}{{\mathrm{int}}}
\newcommand{\Disc}{\mathop{{\mathrm{Disc}}}\nolimits}
\newcommand{\Sing}{\mathrm{Sing\hspace{1pt}}}
\newcommand{\grad}{\mathop{\rm{grad}}\nolimits}
\renewcommand{\d}{{\mathrm{d}}}
\newcommand{\e}{\varepsilon}
\newcommand{\m}{\setminus}
\newcommand{\im}{{\mathrm{Im}}}
\newcommand{\rank}{{\mathrm{rank\hspace{2pt}}}}
\newcommand{\bR}{{\mathbb R}}
\newcommand{\bC}{{\mathbb C}}
\newcommand{\bz}{{\mathbf{z}}}
\begin{document}

\title[Thom irregularity  and Milnor tube fibrations]{Thom irregularity and Milnor tube fibrations}


\author{A.J. Parameswaran}

\address{School of Mathematics, Tata Institute of Fundamental
Research, Homi Bhabha Road, Mumbai 400005, India}
\email{param@math.tifr.res.in}

\author{Mihai Tib\u ar}
\address{Univ. Lille, CNRS, UMR 8524 -  Laboratoire Paul Painlev\'e,  F-59000 Lille, France}
\email{mihai-marius.tibar@univ-lille.fr}

\subjclass[2000]{14D06, 14P15,  32S20, 58K05, 57R45, 58K15}

\keywords{singularities of real polynomial maps, fibrations, bifurcation locus}

\thanks{The authors acknowledge the support of the Labex CEMPI (ANR-11-LABX-0007-01) at Lille and 
of the Mathematisches Forschungsinstitut  Oberwolfach through a RiP program.}

\begin{abstract}

We find natural and convenient conditions which allow us to produce  classes of genuine real map germs with Milnor tube fibration, either  with Thom regularity or without  it.

\end{abstract}
\maketitle


\section{Introduction}\label{s:intro}

The existence of fibered structures is fundamental in the study of spaces and maps. For holomorphic function germs $f : (\bC^n,0)\to (\bC, 0)$, $n >1$, it is well known that around the origin there is a  ``Milnor tube fibration'' (see Definition \ref{d:tube}). This has been discovered by Milnor \cite{Mi} in case of isolated singularities and later by Hironaka \cite{Hi} and L\^{e} D.T. \cite{Le}  for any singular locus and was the starting point of a tremendous work ever since.

The existence of the Milnor  fibration is not insured anymore  in case of maps.
Holomorphic  map germs $G: (\bC^n,0)\to (\bC^p, 0)$, $p>1$, have been studied in the 80's in the fundamental work by Sabbah \cite{Sa1}, \cite{Sa2} and Henry, Merle, Sabbah \cite{HMS} (treating a more general setting)
in connexion with the following problem posed by L\^e D.T. and issued from questions raised by Thom and Deligne: \emph{give a criterion for $G$ to have a Milnor fibration in sufficiently small ball neighbourhoods}.   The class of holomorphic map germs called \emph{ICIS} (isolated complete intersection singularity) has a Milnor fibration outside the discriminant $\Disc G$ due to the isolatedness of the singular locus of the central fibre $G^{-1}(0)$. Its topology has been studied by different methods in numerous papers ever since \cite{Ha, Gr, GH, Eb, ES, Pw1, Pw2} etc.

Hamm and L\^{e}  \cite{HL, Le} have proved  that the existence of a Thom regular stratification of some complex map insures 
the existence of vanishing cycles at any point, i.e. the existence of a Milnor fibration outside the discriminant.  The existence of a Thom regular stratification has been characterised in \cite{HMS} by the property called ``no blowing-up in codimension 0'' which can be described in terms of polar invariants. Integral closure of modules has been used by Gaffney \cite{Ga} in his study of the Thom regularity.

\medskip

 
In this paper we consider the real setting.  We work with the following definition:
 \begin{definition}\label{d:tube}
Let $G:(\mathbb{R}^{m},0) \rightarrow (\mathbb{R}^{p}, 0)$, $m\ge p\ge 1$, be a non-constant map germ.
We say that $G$ has {\em Milnor tube fibration} if $\Sing G\subset G^{-1}(0)$  and if, for any $\e > 0$ small enough, there exists some positive $\eta \ll \e$ such that the restriction:
\begin{equation}\label{eq:tube}
 G_| : \bar B^{m}_{\e} \cap G^{-1}( B^{p}_\eta \m \{0\}) \to  B^{p}_\eta \m \{0\} 
\end{equation}
 is a \emph{locally trivial fibration}, the topological type of which is \emph{independent} of the choices of $\e$ and $\eta$.
 \end{definition}

The existence of Milnor tube fibrations is implied\footnote{but is not equivalent, see \S 5.}  by  the existence of a Thom (a$_{G}$)-regular stratification at $G^{-1}(0)$,  exactly like in the complex holomorphic setting.  We shall say, for short,  that  \emph{$G$ is Thom regular}, or that \emph{$G$ has Thom regularity.}

The first strategy would be to find natural and convenient enough conditions for the Thom regularity  such that to produce large 
classes of  maps $G$ with Thom regularity and therefore with Milnor tube fibration.
A sufficient condition of \L ojasiewicz type has been formulated by Massey's  \cite{Ma}; however it  did not yield new examples outside the class of functions (real or complex  analytic) where the classical \L ojasiewicz inequality holds and implies the Thom regularity, cf \cite{HL}. More efficient in  producing new examples seem to be some results in particular settings which can be checked more easily, see \S \ref{proof} for  a short discussion:  in case of mixed functions $G : \bC^{n}\to \bC$ which are Newton strongly non-degenerate \cite{Oka}; in case of  certain functions of type  $f\bar g$ in 2 variables \cite{PS} and  in 3 variables \cite{FM}, where $f$ and $g$ are holomorphic germs.
 
 \smallskip

 Along the first strategy, we focus here to the class of mixed functions of type $f\bar g$  in any number $n>1$ of coordinates, and therefore to the  following two problems:

\smallskip

\noindent
(i)  characterize in simple terms the condition  ``$f\bar g$ has isolated critical value''.

\noindent
(ii)  give natural and sharp conditions under which $f\bar g$ has a Thom regular stratification.

\smallskip

Our results are as follows. 

Theorem \ref{t:main1} proves that
 \emph{$f\bar g :  (\bC^{n}, 0) \to (\bC, 0)$ has an isolated critical value if and only if the discriminant $\Disc (f,g)$ is tangent to the coordinate axes.} 
 
Next, our Theorem \ref{t:mainthom1} proves that \emph{the Thom irregularity locus of $f\bar g$ is included into that of the holomorphic map $(f,g)$.}
  Consequently, if $(f,g)$ has a Thom stratification, e.g. if it is an ICIS,   then $f\bar g$ is Thom regular (and has a highly non-isolated singularity for $n>2$).
From this handy result we draw significant classes of maps $f\bar g$ which  have  isolated critical value, nonisolated singularities, and are Thom regular  (cf \S \ref{MilnorwithThom}).
 Another such class is provided by our Corollary \ref{p:thomseb},  which is a Thom-Sebastiani type result: \emph{for $f$ and $g$ in separate variables,  $f\bar g$ is Thom regular.}

\medskip

The second
 strategy would be to produce  classes
 of real maps with Milnor tube fibrations but \emph{without Thom regularity}.
The possibility of this  phenomenon has been indicated by the first examples  of real maps with \emph{singular open book structure} and  without Thom regularity,  given in \cite{Ti-obw}, \cite[\S 5.3]{ACT}, \cite{Pa}. 

In this vein, by Theorem \ref{t:homogen} we provide a sufficient condition in $n>1$ variables: \emph{if $F$ is a non-constant mixed germ which is polar weighted-homogeneous, then $F$  has Milnor tube fibration.}  This enables us to show, in particular,  that the functions of the infinite class
 $F(x,y,z)= (x+z^k)y\bar x$, $k\ge 2$, have noninsolated singularities, have Milnor tube fibration, but do not have Thom regularity.


\section{Mixed functions}\label{mixed}

The terminology \emph{mixed polynomial} has been introduced by Mutsuo Oka  \cite{Oka1, Oka2, Oka3} and refers to the following setting.
Let $F:= (u,v) : \bR^{2n} \to \bR^2$ be a polynomial map germ at the origin and let us fix some coordinates  $(x_1, y_1,  \ldots , x_n, y_n)$. By writing $\mathbf{z}=\mathbf{x}+i\mathbf{y}\in \bC^n$, where $z_{k}=x_{k}+iy_{k} \in \bC$ for $k=1,\ldots,n$, 
one gets a polynomial function $F: \bC^n \to \bC$ in variables $\bz$ and $\bar \bz$, namely 
$F(\mathbf{z},\mathbf{\bar{z}}):=u(\frac{\mathbf{z}+\mathbf{\bar{z}}}{2},\frac{\mathbf{z}-\mathbf{\bar{z}}}{2i})+iv(\frac{\mathbf{z}+\mathbf{\bar{z}}}{2},\frac{\mathbf{z}-\mathbf{\bar{z}}}{2i})$, and reciprocally.
We shall use the notations: $\d F : =\left(\frac{\partial F}{\partial z_{1}},\cdots,\frac{\partial F}{\partial z_{n}}\right)$,
$\overline{\d}F :=\left(\frac{\partial F}{\partial\overline{z}_{1}},\cdots\frac{\partial F}{\partial\overline{z}_{n}}\right)$, and 
 $\overline{\d F} := \left(\frac{\partial \bar F}{\partial\overline{z}_{1}},\cdots\frac{\partial \bar F}{\partial\overline{z}_{n}}\right)$ is the conjugate of $\d F$.
We then call \textit{mixed polynomial} the following presentation:
\begin{equation}\label{eq:mixed}
 F(\mathbf{z}) = F(\mathbf{z},\mathbf{\bar{z}})=\underset{{\scriptstyle \nu,\mu}}{\sum}c_{\nu,\mu}\mathbf{z}^{\nu}\mathbf{\bar{z}^{\mu}}
\end{equation}

 where $c_{\nu,\mu}\in \bC$,  $\mathbf{z}^{\nu} :=z_{1}^{\nu_{1}}\cdots z_{n}^{\nu_{n}}$ for
$\nu=(\nu_{1},\cdots,\nu_{n})\in\mathbb{N}^{n}$ and  
 $\mathbf{\bar{z}^{\mu}} :=\bar{z}_{1}^{\mu_{1}}\cdots\bar{z}_{n}^{\mu_{n}}$
for $\mu=(\mu_{1},\cdots\mu_{n})\in\mathbb{N}^{n}$. 
 
One may replace ``polynomial'' in the above definition by ``real analytic map germ'' and get the so-called \emph{mixed functions}.

The \emph{singular locus} $\Sing F$ of a mixed function $F$ is by definition the set of critical
points of $F$ as a real-valued map. We have
 $\mathbf{z}\in\Sing F$  (see \cite{Oka1}, \cite{CT}) if and only if there exists $\lambda\in\mathbb{C}$,
$\left|\lambda\right|=1$, such that:
\begin{equation}\label{eq:singmixtes}
\overline{\mathrm{d}F}(\mathbf{z},\overline{\mathbf{z}})=\lambda\overline{\mathrm{d}}F(\mathbf{z},\overline{\mathbf{z}})
\end{equation}

In order to control the limits of tangent spaces to fibres of $F$ we need the following simple result which will play a key role in 
  the proofs:

\begin{lemma}\label{l:M(f)geometric}
Let $F: \bC^n \to \bC$ be a mixed function. The normal vector space to the fibre of $F$ at the point $\bz \in \bC^n$ is generated 
by the following vectors in $\bC^{n}$:
\begin{equation}\label{eq:normal}
 {\mathbf n}_{\mu} :=
\mu\overline{\mathrm{d}F}(\mathbf{z},\overline{\mathbf{z}})+\overline{\mu}\overline{\mathrm{d}}F(\mathbf{z},\overline{\mathbf{z}}),
\end{equation}
for all $\mu\in\bC^*$, $|\mu | =1$.
 \end{lemma}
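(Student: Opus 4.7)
The plan is to identify the normal space to the fibre with the real $2$-plane $\bR\nabla u(\bz)+\bR\nabla v(\bz)$, to convert these real gradients to Wirtinger form, and to recognise each $n_{\mu}$ as a particular real linear combination of $\nabla u$ and $\nabla v$ indexed by $\mu$ on the unit circle.

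First I would write $F=u+\i v$ with $u,v$ real analytic, so that the fibre $F^{-1}(F(\bz))$ is locally cut out by the real equations $u-u(\bz)=0$ and $v-v(\bz)=0$. Its normal space at $\bz$, viewed inside $\bR^{2n}\simeq \bC^{n}$, is therefore the real linear span of the Euclidean gradients $\nabla u(\bz)$ and $\nabla v(\bz)$ (which span a $2$-plane at smooth points of the fibre, and may degenerate at points of $\Sing F$, where the span nevertheless continues to agree with that of the $n_{\mu}$'s).

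Next I would translate these real gradients into Wirtinger coordinates. The standard identities $\partial/\partial x_{k} = \partial/\partial z_{k} + \partial/\partial \bar z_{k}$ and $\partial/\partial y_{k} = \i(\partial/\partial z_{k} - \partial/\partial \bar z_{k})$ show that, under the identification $(\xi_{k},\eta_{k})\leftrightarrow \xi_{k}+\i\eta_{k}$, the real gradient of any real function $h$ on $\bC^{n}$ corresponds to the complex vector with entries $2\,\partial h/\partial \bar z_{k}$. Applied to $u=\tfrac{1}{2}(F+\bar F)$ and $v=\tfrac{1}{2\i}(F-\bar F)$, and using $\partial\bar F/\partial \bar z_{k}=\overline{\partial F/\partial z_{k}}$, this yields
\begin{equation*}
\nabla u \;=\; \overline{\d}F + \overline{\d F},
\qquad
\nabla v \;=\; \i\bigl(\overline{\d F} - \overline{\d}F\bigr).
\end{equation*}

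Finally, writing $\mu = a+\i b$ with $a^{2}+b^{2}=1$, a direct expansion gives
\begin{equation*}
n_{\mu} \;=\; \mu\,\overline{\d F} + \bar\mu\,\overline{\d}F
\;=\; a\bigl(\overline{\d F} + \overline{\d}F\bigr) + \i b\bigl(\overline{\d F} - \overline{\d}F\bigr)
\;=\; a\,\nabla u + b\,\nabla v,
\end{equation*}
so every $n_{\mu}$ lies in the normal space, while $n_{1}=\nabla u$ and $n_{\i}=\nabla v$ already generate it. The only delicate step is the correct identification of the Euclidean gradient of a real function on $\bC^{n}$ with the complex vector $2\overline{\d}h$; once this is in place, the rest of the argument is purely formal.
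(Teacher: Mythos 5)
Your proof is correct and follows essentially the same route as the paper: identify the normal space with the real span of $\d\,\mathrm{Re}F$ and $\d\,\mathrm{Im}F$, pass to Wirtinger coordinates, and observe that $n_{\mu}$ with $\mu=\alpha+i\beta$ is exactly the combination $\alpha\,\nabla u+\beta\,\nabla v$. Your version is a bit more explicit about the identification $\nabla h \leftrightarrow 2\overline{\d}h$ and about why $n_{1}$ and $n_{i}$ already generate, but the argument is the same.
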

 
\begin{remark}\label{r:stupiderrorPS} 
As one can see in \eqref{eq:normal}, the set of all normal vectors at some point of a fibre of $f\bar g$ is a real 2-plane but it might be not a complex line, since it is not necessarily invariant under multiplication by $i$.  
\end{remark}
\subsection{Proof of Lemma \ref{l:M(f)geometric}}
Let $F: \bC^n = \mathbb{R}^{2n}\to\mathbb{R}^{2},\, F(\mathbf{z},\overline{\mathbf{z}}) =(\mathrm{Re}F(\mathbf{z},\overline{\mathbf{z}}),\mathrm{Im}F(\mathbf{z},\overline{\mathbf{z}}))$.
Then ${\mathbf n}$ is a normal vector to the fibre of $F$ at $\bz$ iff 
there exist $\alpha,\beta \in\mathbb{R}$
such that ${\mathbf n} =\alpha\mathrm{dRe}F(\mathbf{\bz})+\beta\mathrm{dIm}F(\mathbf{\bz})$.

By displaying this equality we easily get  ${\mathbf n}_k =(\alpha+\beta i)\frac{\partial\bar{F}}{\partial\overline{z}_{k}}+(\alpha-\beta i)\frac{\partial F}{\partial\overline{z}_{k}}$
for every $k\in\{1,\ldots,n\}$.  Our claim follows by taking $\mu=\alpha+\beta i$.

\subsection{Mixed functions of type $f\bar g$}\ 

The real maps $f\bar g: (\bC^n,0)\to (\bC, 0)$, where $f$ and $g$ are holomorphic functions, are  particular mixed functions. A'Campo \cite{AC} had used them to produce examples of  real singularities which are not topologically equivalent to holomorphic ones.  Pichon and Seade studied some topological properties of this class in 2 complex variables \cite{Pi}, \cite{PS}, see also \cite{RSV}.  Ishikawa  \cite{Is1} 
found overtwisted contact structures on $S^3$ which cannot be realised by holomorphic functions. Some topological aspects have been studied in 3 variables by Fernandes de Bobadilla and Menegon Neto \cite{FM}.

\smallskip
 
Our general solution to problem (i) of the Introduction is based on the natural relation of $f\bar g$ to the holomorphic map $(f,g)$, as follows.

\begin{theorem}\label{t:main1}
 Let $f,g : (\bC^n,0)\to (\bC, 0)$, $n>1$,  be some non-constant holomorphic function germs. 
 
 Then
the discriminant $\Disc f\bar g$ of the mixed function germ $f\bar g : (\bC^n,0)\to (\bC, 0)$ 
is either $\{ 0\}$  or a union of finitely many real semi-analytic arcs at the origin.  

Moreover,  $\Disc f\bar g \subset \{0 \}$  if and only if  the discriminant $\Disc (f,g)$ of the map $(f,g)$ contains 
only curve components which are tangent to the coordinate axes.

\end{theorem}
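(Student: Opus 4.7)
My plan is to reduce the study of $\Disc(f\bar g)$ to the holomorphic discriminant $\Disc F$ with $F=(f,g)$, by exploiting the factorization $f\bar g = \pi\circ F$, where $\pi\colon\bC^2\to\bC$ sends $(w,s)\mapsto w\bar s$. First, for $h=f\bar g$, I would compute $\d h=\bar g\,\d f$ and $\bar\d h=f\,\overline{\d g}$, and insert into the mixed singularity criterion \eqref{eq:singmixtes}: $\bz\in\Sing h$ iff $g(\bz)\,\overline{\d f}(\bz) = \lambda f(\bz)\,\overline{\d g}(\bz)$ for some $\lambda\in\bC$ with $|\lambda|=1$.

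Next I would stratify $\Sing h$ according to the vanishing of $f,g$ at $\bz$. If $f(\bz)=0$ or $g(\bz)=0$, then $h(\bz)=0$, so such critical points contribute only the origin to the discriminant. Off $V(f)\cup V(g)$, the criterion forces $\d f(\bz)=\alpha\,\d g(\bz)$ for some $\alpha\in\bC^*$ with $|\alpha|=|f(\bz)|/|g(\bz)|$; in particular $\bz\in\Sing F$. At a smooth point of $\Sing F$, pushing a tangent vector through $dF$ identifies $\alpha$ with the slope $dw/ds$ of the image curve $\Disc F$ at $F(\bz)=(w,s)$, so the modulus constraint reads $|dw/ds|=|w/s|$ on $\Disc F$.

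The heart of the proof is then an irreducible-component analysis of $\Disc F$ near the origin. For a non-axis line component $L=\{w=cs\}$, $c\neq 0$: slope and $w/s$ are both equal to $c$, so the constraint is automatic, and $\pi(L)=\{c|s|^2:s\in\bC\}=c\cdot\bR_{\geq 0}$ is a closed real half-line from the origin. Axis components $\{w=0\}$ and $\{s=0\}$ collapse to $\{0\}$ under $\pi$. For a non-line irreducible component $C$ through the origin, parametrized via Puiseux as $(w(t),s(t))$, the condition becomes $|w'(t)s(t)|=|w(t)s'(t)|$. I would argue via an open-mapping/Liouville-type argument that if this equality were to hold identically in $t$, the meromorphic function $w'(t)s(t)/(w(t)s'(t))$ would have constant modulus one hence be a unimodular constant, and matching the leading Puiseux orders $p=\ord_0 w$, $q=\ord_0 s$ would force $p/q=1$ and $w/s$ to be constant, contradicting that $C$ is not a line. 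Hence on non-line components the $\pi$-image of the constraint set is concentrated at the origin as a germ.

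Putting everything together, $\Disc(f\bar g)$ is $\{0\}$ together with the finite union of the half-lines $c_L\cdot\bR_{\geq 0}$ arising from the non-axis line components $L=\{w=c_L s\}$ of $\Disc F$; this yields both the structural statement and the equivalence $\Disc(f\bar g)\subset\{0\}\iff\Disc(f,g)$ has no line component different from the axes. The most delicate point I expect is the Puiseux/open-mapping argument for non-line components, namely showing that the real-analytic locus $\{t:|w'(t)s(t)|=|w(t)s'(t)|\}\subset\bC_t$ projects under $t\mapsto w(t)\overline{s(t)}$ to the origin as a germ; this is precisely where the dichotomy between line and non-line components plays its decisive role.
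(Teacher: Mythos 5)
Your overall strategy is essentially the paper's: factor $f\bar g$ through $F=(f,g)$, observe via \eqref{eq:singmixtes} that critical points off $V$ are corank-one points of $F$ with $\d f=\alpha\,\d g$, $|\alpha|=|f|/|g|$, and then do a component-by-component Puiseux analysis of $\Disc F$ (the paper performs the reduction on the target side, via a Whitney stratification making $(f,g)$ a stratified submersion and then studying $u\bar v$ restricted to each component of $\Disc(f,g)$; its Lemma~\ref{l:sing} is exactly your component analysis). A secondary point you should not skip: the identification of $\alpha$ with the slope of $\Disc F$ at $F(\bz)$ holds because the tangent of the discriminant at (generic) points is contained in the image of $dF$ at a corank-one critical point, but rank-zero points and non-immersive points of the parametrization need an argument; and conversely, for the half-line direction you must check that critical points of $(u\bar v)|_{L}$ actually lift to critical points of $f\bar g$ upstairs. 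The paper's stratified-submersion set-up is there precisely to organize these two implications.

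The serious problem is the step you yourself flag, and as written it fails. For a non-line component you only exclude the case where $|w'(t)s(t)|=|w(t)s'(t)|$ holds \emph{identically}; Liouville then applies, but it says nothing when the unimodularity locus of $\phi(t)=w'(t)s(t)/\bigl(w(t)s'(t)\bigr)$ is a proper, nonempty real-analytic curve accumulating at $0$ --- and that is exactly what happens whenever the two leading Puiseux orders coincide. Indeed, if $w=t^{p}$ and $s=a_{1}t^{p}+a_{2}t^{p+j}+\hot$ with $a_{2}\neq0$, then $\phi(t)=1-\tfrac{j}{p}\tfrac{a_{2}}{a_{1}}t^{j}+\hot$, so $\{|\phi|=1\}=\{2\re(ct^{j})+\hot=0\}$ with $c\neq0$ is a nonempty curve through $0$, and along it $w\bar s=\bar a_{1}|t|^{2p}(1+o(1))\neq0$; concretely, for $(w,s)=(t,\,t+t^{2})$ the constraint locus is the circle $2\re t+3|t|^{2}=0$ and its image under $t\mapsto w(t)\overline{s(t)}$ is a curvilinear arc, not $\{0\}$ and not a half-line. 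So your conclusion ``on non-line components the $\pi$-image of the constraint set is concentrated at the origin'' is not delivered by your argument: what your leading-order computation actually detects is the dichotomy $p\neq q$ (component tangent to an axis) versus $p=q$, not line versus non-line. The paper's Lemma~\ref{l:sing} disposes of the $p=q$, non-line case by comparing the \emph{second} terms of the Puiseux expansions in \eqref{eq:sing}; to complete your proof you would have to either justify such a term-by-term comparison (delicate, since the unimodular factor $\lambda$ varies from point to point, so one cannot simply equate moduli of individual terms) or give an independent reason why the points of the unimodularity curve do not produce nonzero critical values of $f\bar g$ --- your open-mapping/Liouville sketch does neither.
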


 In the particular case of 2 variables, Pichon and Seade \cite[Theorem 4.4]{PS} had found a solution to problem (i)
 in terms of an embedded resolution of the plane curve  $fg=0$, which also works whenever the source $\bC^{2}$ is replaced by a normal surface. 

\subsection{Proof of Theorem \ref{t:main1}}

We denote $V:= (f\bar g)^{-1}(0) = \{f=0\} \cup \{g=0\}$.  Let us first compare $\Sing f\bar g$ to $\Sing(f,g)$.
\begin{lemma}\label{l:sing1}  
 \
\begin{enumerate}
\item $\Sing f\bar g \m V \subset \Sing (f,g) \m V $.
\item $\Sing f\bar g \cap V = \{ f=g=0 \} \cup \Sing f \cup \Sing g$.
\end{enumerate}
\end{lemma}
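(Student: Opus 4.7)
The plan is to reduce the statement to a direct manipulation of the singular locus criterion \eqref{eq:singmixtes}. Since $f$ is holomorphic and $\bar g$ antiholomorphic, the Wirtinger derivatives of $F=f\bar g$ are
\[
\d F = \bar g\cdot \d f, \qquad \overline{\d} F = f\cdot \overline{\d g}, \qquad \overline{\d F} = g\cdot \overline{\d f}.
\]
Substituting these into \eqref{eq:singmixtes} turns the condition $\bz\in \Sing F$ into the existence of some $\lambda\in \bC$ with $|\lambda|=1$ such that
\[
g(\bz)\,\overline{\d f}(\bz) \ = \ \lambda\, f(\bz)\,\overline{\d g}(\bz). \qquad (\star)
\]
This is the only identity the proof will use.

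For part (a), at a point $\bz\notin V$ both $f(\bz)$ and $g(\bz)$ are nonzero, so $(\star)$ forces $\overline{\d f}(\bz)$ and $\overline{\d g}(\bz)$ to be $\bC$-linearly dependent; conjugating, the same holds for $\d f(\bz),\d g(\bz)$, i.e.\ the Jacobian of $(f,g)$ has rank $<2$ at $\bz$. Hence $\bz\in\Sing(f,g)\m V$, which is the asserted inclusion.

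For part (b), I would split on which of $f(\bz),g(\bz)$ vanishes at a point $\bz\in V$. If $f(\bz)=g(\bz)=0$, both sides of $(\star)$ vanish and $\bz\in\Sing F$ automatically, producing the stratum $\{f=g=0\}$. If $f(\bz)=0$ but $g(\bz)\neq 0$, then $(\star)$ collapses to $g(\bz)\overline{\d f}(\bz)=0$, which is equivalent to $\d f(\bz)=0$, i.e.\ to $\bz\in\Sing f$. The symmetric case $f(\bz)\neq 0$, $g(\bz)=0$ yields $\bz\in\Sing g$. Taking the union of the three contributions gives the asserted equality, provided one recalls that, for a germ at $0$ with $f(0)=0$, the locus $\Sing f$ lies inside $\{f=0\}$ (since $f$ is constant on each component of $\Sing f$ through $0$ and $f(0)=0$ fixes that constant), and similarly for $g$; this places $\Sing f\cup\Sing g$ inside $V$ in the germ sense.

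No serious obstacle is anticipated: the argument reduces to \eqref{eq:singmixtes} plus elementary case analysis. The only point to monitor is that the unit modulus constraint on $\lambda$ plays no role --- it is vacuous when both sides of $(\star)$ vanish, and in the remaining cases $(\star)$ degenerates to a linear equation whose solvability does not depend on $\lambda$.
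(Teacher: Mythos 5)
Your proof is correct and follows essentially the same route as the paper: you derive the characterisation $\Sing f\bar g=\{g\,\overline{\d f}=\lambda f\,\overline{\d g},\ |\lambda|=1\}$ from \eqref{eq:singmixtes} and then carry out the case analysis that the paper dismisses as ``a simple computation''. Your added remark that $\Sing f\subset\{f=0\}$ as germs (so that $\Sing f\cup\Sing g\subset V$) is a correct and worthwhile detail that the paper leaves implicit.
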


 \begin{proof}
In coordinates $(\bz, \bar \bz)$ we have the following characterisation: 
\begin{equation}\label{eq:sing}
 \Sing f\bar g = \{ g \overline{\d f} =  \lambda f \overline{\d g}, \ | \lambda | = 1\}
\end{equation}
 which follows from \eqref{eq:singmixtes}.
On the other hand we have:
\[ \Sing(f,g) = \{ \bz\in \bC^n \mid \rank \langle \d f(\bz), \d g(\bz)\rangle <2\}.\]
 One then derives our statement by a simple computation.
 \end{proof}

 In particular, Lemma \ref{l:sing1} tells  that any point $\bz \not\in \Sing (f,g)\cup V$ is a regular point of the map $f\bar g$.  
Consider a small enough ball $B_{\e}\subset \bC^{n}$ and a disk $D_{\delta}\subset \bC$ with $0< \delta \ll \e$.
It is well-known that one can find complex analytic Whitney stratifications of  the  target (and of the source) such that the map $(f,g) : B_{\e}\cap  (f,g)^{-1}(D_{\delta}) \to D_{\delta}$ is a stratified submersion, see e.g. \cite{GLPW}. This stratification has $D_{\delta} \m \Disc (f,g)$ as its unique 2-dimensional stratum. The other strata of the target are precisely the 1-dimensional strata corresponding to the irreducible components of $\Disc (f,g)$, and the origin is the unique stratum of dimension zero. We recall that all these objects are considered as germs at the origin.

The map $f\bar g$ decomposes as $\bC^n \stackrel{(f,g)}{\rightarrow} \bC^2 \stackrel{u\bar v}{\rightarrow} \bC$.  
We have just seen that the restriction of the map $(f,g)$ to the strata of  $\Sing (f,g)$ is a stratified submersion 
over every corresponding positive dimensional complex stratum of  $D_{\delta}$.  Therefore in order to find the locus 
of non-regular values of the function $f\bar g$ we need to study the restrictions of $u\bar v$ to these strata of $D_{\delta}$.
 First of all, the restriction of $u\bar v$ to the open stratum $D_{\delta} \m \Disc (f,g)$  is a submersion  since $\Sing (u\bar v) = (0,0)$. 
 
 Let us then consider  the restriction of $u\bar v$ to an irreducible component of $\Disc (f,g)$.

\begin{lemma}\label{l:sing}\footnote{The previous version of this criterion was wrong; the problem has been pointed out by Mutsuo Oka.  Its proof contained a mistake in the second part of the computation. This version corrects the computation and gives the adjustment of the  statement and of its proof. Three other statements (Thm. \ref{t:main1}, Thm. \ref{t:mainthom1} and Cor. \ref{c:tubefib}) needed to be reformulated due to this change of criterion, but their proofs are the same. Subsection 4.1 has been completely removed.}
Let $C$ be an irreducible component of the germ $(\Disc (f,g), 0)\subset (\bC^{2}, 0)$.  Then the restriction $(u\bar v)_{|C \m \{ 0\}}$ is a submersion if and only if  $C$ is tangent to one of the coordinate axes without coinciding with it.
\end{lemma}

\begin{proof}
We continue to work with set germs.

In case the irreducible curve germ $C$ is one of the coordinate axes, say $\{ u=0\}$  (and note that $\{ u=0\}\subset \Disc (f,g)$ if and only if   $\Sing f \m \{ g=0\}\not= \emptyset$),  the image  $u\bar v (\{ u=0\})$ is the origin and  the restriction $(u\bar v)_{|C \m \{ 0\}}$ is totally singular.

Let the irreducible curve $C$ be different from the coordinate axes, and let $u=t^p$, $v= a_1 t^q + \mbox{h.o.t.}$ be a Puiseux expansion of  $C$, with $a_1\not= 0$. We may assume without loss of generality that $p\le q$ since the singular locus of $u\bar v$ coincides to that of $v\bar u$, thus 
we may interchange the coordinates $u$ and $v$.

The equality  which defines the singular locus of the function $u(t)\bar v(t)$ is 
$v \overline{\d u} = \lambda u \overline{\d v}$, where $| \lambda | = 1$, and note that $\lambda$ depends on $t$. 
Taking the modulus both sides gives $|v \d u|= |u\d v|$, and thus $| p a_1 t ^{p+q-1} + \mbox{h.o.t.}|
= | q a_1 t ^{p+q-1} + \mbox{h.o.t.}|$. 
Dividing out by $|a_1||t|^{p+q-1}$, and taking the limit for $t\to 0$, one gets the equality
$p=q$. This implies that if $C$ is tangent to the coordinate axes then the restriction $(u\bar v)_{|C \m \{ 0\}}$ is a submersion.

The change with respect to the previous proof starts from now: we show that, reciprocally,   if $p =q$ then the equation $v \overline{\d u} = \lambda u \overline{\d v}$ has solutions, as follows.
\\
If $v= a_1 t^p$ is a one-term expansion, then all points $t$ are solutions of this equation, and with $\lambda \equiv 1$;  consequently the corresponding critical value set contributes with a real  half-line in $\Disc f\bar g$, since $C$ is a line different from the coordinate axes by our assumption $a_1\not= 0$. \\
If  the Puiseux expansion $v= a_1 t^p + a_2 t^{p+j} + \mbox{h.o.t.}$ has at least 2 terms, where $j\ge 1$, then the equivalent equation  $| v \d u | =| u \d v |$ reduces, after dividing out by $p| a_{1}| |t|^{2p-1}$, to an equation of the form
$| 1 + bt^{j}+ \mbox{h.o.t.}| =| 1 + ct^{j}+ \mbox{h.o.t.}|$, with $b,c\not= 0$ and $b\not= c$.  By inverting  one of the holomorphic series, we obtain the equation 
$| 1+ \alpha t^{j} +\mbox{h.o.t.}| =1$ for some $\alpha \not=0$,  which has solutions for all small enough $t$. More precisely, there is at least one\footnote{actually an even number of semi-analytic arcs} real semi-analytic arc $\gamma(s)$  parametrised by $s\in [0, \e[$, for some small enough $\e>0$,
which is solution  of this equation. Then its image  $(u\bar v)(\gamma)$ is included in the discriminant $\Disc f\bar g$.  Since by hypothesis neither $u$ nor $v$ are constant along this arc,  it follows that this image is  not reduced to the point $0$, hence it must be a non-trivial continuous real arc.  This ends the proof of our statement.
  \end{proof}
  
\smallskip
The first assertion of our Theorem \ref{t:main1} follows from the proof of Lemma \ref{l:sing} and from \L ojasiewicz' result that the image by an analytic map of a real semi-analytic arc is a semi-analytic arc.
The second assertion follows since  Lemma \ref{l:sing}  provides the exact characterisation of the condition $\Disc f\bar g \subset \{0\}$ in terms of $\Disc(f,g)$. 


\bigskip

\section{The locus of Thom irregularity for $f\bar g$} \label{proof}

We address here problem (ii) stated in the Introduction:\emph{ in $n>1$ variables, under what conditions $f\bar g$ has a Thom regular stratification.}

Let us first shortly review some existing results in particular settings. 

In case of mixed functions $h : \bC^{n}\to \bC$ which are Newton strongly non-degenerate and non-convenient, Oka  gives in \cite[Theorem 10]{Oka} a sufficient condition for $h$ to be Thom regular, thus have Milnor tube fibration. It also follows that $h$ has an isolated critical value in  this case.

In 2 variables, Pichon and Seade \cite[Proposition 1.4]{PS} claimed that  $f\bar g$ is Thom regular if it has isolated critical value, and if $f$ and $g$ have no common factor. It has been claimed in a subsequent article that this extends to $n$ variables,  which was disproved by a counterexample  \cite{Pa}, \cite{PS2}.  The proof  of \cite{PS} in 2 variables contains itself the same error, namely it tacitly assumes that the normal space at a point of some fibre of $f\bar g$ is generated by a single vector. Our Remark \ref{r:stupiderrorPS} and \eqref{eq:normalvect} below show that the space of  normal vectors at a point of some fibre of $f\bar g$ is a real 2-plane which can be only exceptionally a complex line. The good news is that the statement of \cite[Proposition 1.4]{PS} remains however true since it can be regarded now as a particular case,  in 2 variables,  of  our Theorem \ref{t:mainthom1}, the proof of which is based on totally different principles.
 
In 3 variables,  Fernandes de Bobadilla and Menegon Neto \cite{FM}   give an ``if and only if''  criterion  \cite[Theorem 1.1(i)]{FM}
for the mixed functions of type  $f\bar g$ with  isolated critical value  to have  Milnor tube fibration.  We send to Remark \ref{r:3var} for a comment and an open question which relates this to our forthcoming Theorem \ref {t:homogen}.

\bigskip

We use here a totally different strategy in addressing the Thom regularity, inspired by the work of  Henry, Merle and Sabbah \cite{HMS}.
 Referring to Thom's paper \cite{Th} for the Thom's regularity condition, we introduce the ``Thom irregularity set'':

\[
 NT_{f\bar g} := \left\{ x\in (f\bar g)^{-1}(0) \mid \begin{array}{ll} 
                                            \mbox{ there is no Thom } (a_{f\bar g})\mbox{-stratification of } 
(f\bar g)^{-1}(0)
\\ \mbox{ such that } x \mbox{ is on a positive dimensional stratum. }
                                           \end{array}
\right\}
\]

The set $NT_{(f, g)}$ associated to the map $(f, g)$ instead of $f\bar g$ is defined analogously and has been studied by Henry, Merle and Sabbah \cite{HMS}. Our  result is the following:

\begin{theorem}\label{t:mainthom1}
 Let $f, g : (\bC^n, 0) \to (\bC, 0)$ be any holomorphic germs such that  $\Disc(f, g)$ contains only curves tangent to the coordinate axes. 
  Then
\[
NT_{f\bar g} \subset NT_{(f, g)}.
\]
\end{theorem}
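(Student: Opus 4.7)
The plan is to show that any Thom $(a_{(f,g)})$-regular stratification at $x\in V:=(f\bar g)^{-1}(0)$ automatically satisfies the $(a_{f\bar g})$ condition at the same point. The structural input driving the argument is the factorisation
\[
f\bar g \;=\; (u\bar v)\circ (f,g), \qquad u\bar v:\bC^2\to\bC,\ (a,b)\mapsto a\bar b,
\]
which forces each fibre of $(f,g)$ to be contained in a fibre of $f\bar g$, and thus each tangent space to a $(f,g)$-fibre to sit inside the tangent to the corresponding $f\bar g$-fibre.

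First I would fix $x\in V\setminus NT_{(f,g)}$ and, by the definition of $NT_{(f,g)}$, select a Thom $(a_{(f,g)})$-stratification $\cS=\{S_\alpha\}$ on a neighbourhood of $x$, compatible with $V$ and placing $x$ on a positive-dimensional stratum $S$. The aim is to show this same $\cS$ witnesses $x\notin NT_{f\bar g}$.

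Next I would verify that $f\bar g$ has locally constant rank on every stratum of $\cS$. Strata contained in $V$ are trivial (rank $0$). Every other stratum $S_\alpha$ is mapped submersively by $(f,g)$ onto one of: the open part $D_\delta\setminus\Disc(f,g)$, a punctured irreducible component $C$ of $\Disc(f,g)$, or an isolated point. Applying Lemma \ref{l:sing} to $u\bar v$ restricted to each of these targets determines the rank, and this is where the hypothesis intervenes decisively: the absence of non-axis lines in $\Disc(f,g)$ guarantees that whenever $C\not\subset\{u=0\}\cup\{v=0\}$, $u\bar v|_{C\setminus\{0\}}$ is a submersion, so $f\bar g|_{S_\alpha}$ submerses onto an open subset of $\bC$, and no anomalous intermediate-rank strata appear.

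Finally I would verify $(a_{f\bar g})$ at $x\in S$. Pick a stratum $S_\beta$ with $x\in\overline{S_\beta}$ and a sequence $y_n\in S_\beta$ with $y_n\to x$ such that $T_{y_n}(f\bar g|_{S_\beta})^{-1}(f\bar g(y_n))\to\tau$ in the Grassmannian. By the factorisation one has the pointwise inclusion of fibres inside $S_\beta$, hence
\[
T_{y_n}\bigl((f,g)|_{S_\beta}\bigr)^{-1}\!\bigl((f,g)(y_n)\bigr)\ \subset\ T_{y_n}\bigl(f\bar g|_{S_\beta}\bigr)^{-1}\!\bigl(f\bar g(y_n)\bigr).
\]
By Grassmannian compactness I extract a subsequence so that the left-hand side converges to some $\sigma$; the inclusion survives the limit, giving $\sigma\subset\tau$. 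The $(a_{(f,g)})$-regularity of $\cS$ then supplies $T_xS\subset\sigma$, and therefore $T_xS\subset\tau$, which is precisely $(a_{f\bar g})$ at $x\in S$. The main obstacle I expect is the bookkeeping of Step 2: ensuring that $f\bar g$ restricted to every $\cS$-stratum has locally constant rank and well-defined fibres, which is where the hypothesis on $\Disc(f,g)$ and Lemma \ref{l:sing} must be combined carefully, possibly with a mild refinement of $\cS$ along strata mapped to the axes.
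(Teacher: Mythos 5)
There is a genuine gap: your argument only treats points of $\{f=g=0\}$, while the inclusion $NT_{f\bar g}\subset NT_{(f,g)}$ forces you to deal with all of $V\m\{f=g=0\}$ as well. Indeed $NT_{(f,g)}$ is by definition a subset of $(f,g)^{-1}(0)=\{f=g=0\}$, whereas $\Sing f\bar g=\{f=g=0\}\cup\Sing f\cup\Sing g$, so to get the stated inclusion you must show that every point of $(\Sing f\cup\Sing g)\m\{f=g=0\}$ lies outside $NT_{f\bar g}$, i.e.\ admits a Thom $(a_{f\bar g})$-regular stratification with a positive dimensional stratum through it. Your opening move, ``fix $x\in V\m NT_{(f,g)}$ and, by the definition of $NT_{(f,g)}$, select a Thom $(a_{(f,g)})$-stratification placing $x$ on a positive-dimensional stratum,'' does not parse at such a point: $x$ is not in the zero fibre of $(f,g)$, so the definition of $NT_{(f,g)}$ gives no stratification datum there, and the $(a_{(f,g)})$-condition you later invoke in the limit step would have to hold along a \emph{non-zero} fibre of $(f,g)$ (the fibre $(f,g)^{-1}(0,g(x))$), which is neither assumed nor implied by the hypothesis --- Thom regularity of a map along its non-central fibres can genuinely fail (cf.\ Sabbah's example), and in any case $x\notin NT_{(f,g)}$ is vacuous for such $x$. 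Moreover, at these points the inclusion ``tangent to the $(f,g)$-fibre $\subset$ tangent to the $f\bar g$-fibre'' is too weak by itself: the $(f,g)$-fibres have real codimension $4$ while the $f\bar g$-fibres have real codimension $2$, so one needs genuinely finer control of the normal $2$-plane of $f\bar g$.

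This is exactly what the paper's Step 1 supplies and what is missing from your proposal: near $x_0\in\Sing f\m\{g=0\}$ one uses the \L ojasiewicz inequality $c|f|^\theta\le\|\grad f\|$ with $\theta\in\,]0,1[$, together with $|g|>\delta$ and boundedness of $\|\d g\|$, to see that after dividing the normal vectors $n_\mu=\mu g\,\overline{\d f}+\bar\mu f\,\overline{\d g}$ by $|f|^\theta$ the second term vanishes in the limit; hence the limits of tangent planes to fibres of $f\bar g$ are governed by $\overline{\d f}$ alone, and a Thom $(a_f)$-regular stratification of $\{f=0\}$ (Hironaka, Hamm--L\^e) is automatically $(a_{f\bar g})$-regular there; this reduces $NT_{f\bar g}$ to $\{f=g=0\}$ before the factorisation argument is applied. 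Your final paragraph (fibres of $f\bar g$ are unions of fibres of $(f,g)$, pass the tangent-space inclusion to the limit via Grassmannian compactness, then use $(a_{(f,g)})$ at $x$) is essentially the paper's Step 2 and is correct for $x\in\{f=g=0\}\m NT_{(f,g)}$, including your care in restricting $(f,g)$ to the stratum $S_\beta$; but without an argument of Step 1 type your proof establishes only $NT_{f\bar g}\cap\{f=g=0\}\subset NT_{(f,g)}$, not the theorem. (Your middle paragraph on ranks via Lemma \ref{l:sing} is in effect a re-proof that $\Disc f\bar g\subset\{0\}$, i.e.\ Theorem \ref{t:main1}, and is fine, but it does not substitute for the missing step.)
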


\begin{proof}
Since  the discriminant of $f\bar g$ is the origin only,  the Thom regularity condition is satisfied at every point of $V\m \Sing f\bar g$,
where by $V$ we have denoted the zero locus of $f\bar g$. By Lemma \ref{l:sing1}(b) we have:
\[ \Sing f\bar g  = \{ f=g=0 \} \cup \Sing f \cup \Sing g .\]

The proof of our theorem falls into two main steps.  The first one concerns the Thom regularity along $\Sing f$ and along  $\Sing g$,  outside 
$\{ f=g=0 \}$.
This part is similar to Hironaka's  result about the existence of Thom stratifications in case of holomorphic functions. 
Here, instead of taking Hironaka's viewpoint, we 
inspire ourselves from the proof by  Hamm and L\^{e} \cite{HL} which uses Lojasiewicz 
inequality.

\medskip
\noindent
{\bf Step 1.}  $NT_{f\bar g} \subset \{ f=g=0\}$. 

Let $x_0 \in \Sing f \m \{ g=0\}$. 
There is a small enough neighbourhood of $x_0$ where the following \L ojasiewicz inequality holds:
\begin{equation}\label{eq:loja}
c |f|^\theta \le \| \grad f\|
\end{equation}
for some $\theta \in ]0,1[$ and some $c \in \bR_+$. Moreover, we may assume that in this neighbourhood the real analytic functions $|g|$ and $\| \d g\|$ are bounded and that $|g|> \delta$ for some small enough real positive $\delta$.
 
 In case of $F = f\bar g$, formula \eqref{eq:normal} of Lemma  \ref{l:M(f)geometric} takes the following form:
\begin{equation}\label{eq:normalvect}
 n_{\mu} =
\mu g \overline{\d f}(\bz)+ \overline{\mu} f \overline{\d g} (\bz).
\end{equation}

We consider sequences of points $\bz \to x_0$ in the same neighbourhood, with $f(\bz)\not= 0$.
After  dividing  by $|f|^\theta$ in \eqref{eq:normalvect}, the first term $\mu g \frac{\overline{\d f}}{|f|^\theta}(\bz)$
is bounded away from 0 whereas the second term $\bar \mu \overline{\d g} \frac{f}{|f|^\theta}(\bz)$ tends to 0.
Therefore $\lim_{\bz \to x_0} \frac{1}{|f|^\theta}n_{\mu} = \lim_{\bz \to x_0} \mu g \frac{\overline{\d f}}{|f|^\theta}(\bz)$,
which has the direction of $\lim_{\bz \to x_0} \overline{\d f}(\bz)$, for any $\mu$ with $|\mu | =1$. This is  the same limit of directions as that for the fibres of the holomorphic function $f$ along the sequence of points $\bz\to x_{0}$. Since the zero locus $\{ f=0\}$ admits a Thom (a$_f$)-regular stratification (e.g. by Hironaka's result \cite{Hi}), it  follows that we may  endow the set  $\Sing f \m \{ g=0\}$ with such a Thom regular stratification with complex strata.  Our proof  then shows that this stratification verifies the Thom regularity condition for $f\bar g$.

Switching the roles of $f$ and $g$, the similar conclusion holds for $\Sing f \m \{ g=0\}$.

\medskip
\noindent
{\bf Step 2.} $NT_{f\bar g}\subset \{ f=g=0\} \cap NT_{(f,g)} = NT_{(f,g)}$.

Let $x_{0}\in W \setminus NT_{(f,g)}$, where $W:= \{ f=g=0\}$. We consider sequences of points $\bz \in B_{\e}\m \Sing f\bar g$ tending to $x_{0}$.

By hypothesis the map  $(f,g)$   is  endowed with a Whitney stratification such that some neighbourhood of $x_{0 }\in W$ is the union of finitely many Thom (a$_{(f,g)}$)-regular strata.
Since the fibres of $f\bar g$ are unions of fibres of  the map $(f,g)$, the tangent space at $\bz$ of the fibre of $f\bar g$ contains 
 the tangent space of  the (stratum\footnote{the point $\bz$ might be a singular point of the map $(f,g)$ without being a singular point of the map $f\bar g$, by Lemma \ref{l:sing1}(a), hence we have to take into account our fixed Whitney stratification of the source, and  therefore consider the tangent space to the fibre of the restriction of $(f,g)$ to such a stratum.}  of the) fibre of the map $(f,g)$ to which $\bz$ belongs. 
 
 Taking  the limit $\bz \to x_{0 }\in W$, it follows that 
 the limit of tangent spaces to fibres of $f\bar g$
  contains the limit of the corresponding fibres of $(f,g)$. By construction, the latter contains the tangent space at $x_{0}$ to the stratum of the Thom stratification to which the point $x_{0}$ belongs. This implies that the Thom (a$_{(f,g)}$)-regular stratification of $W$ at $x_{0}$ is also  Thom (a$_{f\bar g}$)-regular.

This ends the proof of our theorem.
  \end{proof}

\begin{remark}
The above proof of Theorem \ref{t:mainthom1} applies to 
  a more general source than $(\bC^{n},0)$,  such as a complex space germ with isolated singularity $(X,0)$.
  \end{remark}

\begin{example}\label{ex:milnortube}  where $NT_{f\bar g} = NT_{(f,g)}$.

Here are two examples, extracted from  \cite[\S 5.3]{ACT}, \cite{Ti-obw}, see also \cite{PS2}, \cite{Oka}.  
In two variables, let  $f\bar g(x,y) = xy\bar x$ (cf \cite{Ti-obw}). In 3 variables, let $f\bar g(x,y,z)= (x+z^2)y\bar x$; this second example is
a deformation of the first and was given by Parusinski \cite{Pa} in response to a question of Tibar in  \cite{Ti-obw}.  It turns out that both examples verify the equality 
$NT_{f\bar g} = NT_{(f,g)}$.  In the first case we have $NT_{f\bar g} = \{ x=0 \} = NT_{(f,g)}$ and in the second
$NT_{f\bar g} = \{ x=z = 0\}= NT_{(f,g)}$. 
\end{example}
We shall see in the next section that for the above examples, despite the  non-existence of Thom stratification,
 the Milnor tube fibration exists since they satisfy our Theorem \ref{t:homogen} below.
 
 \smallskip

\begin{example}\label{ex:sabbah} \label{ex:milnortube2} where $NT_{f\bar g} \not= NT_{(f,g)}$.

Sabbah \cite{Sa1} showed that the map $(f,g) := (x^2 - zy^2,  y)$ is with ``\' eclatement'',  which implies that  it has no Thom regularity at $f=g=0$, thus  $NT_{(f,g)}$ is the $z$-axis.
We have computed $NT_{f\bar g}$ for $f\bar g = (x^2 - zy^2)\bar y$ by using Lemma \ref{l:M(f)geometric}
and found out that $f\bar g$ has the $z$-axis without the origin as a single Thom stratum, thus $NT_{f\bar g}= \{ 0\}$.
 Here are the main steps of the computations.

We claim that all the real normal directions to fibres of $f\bar g$, more precisely all the directions of the generating family of normal vectors:
\[n_{\mu} :=
\mu g \overline{\mathrm{d}f}+\overline{\mu} f \overline{\mathrm{d}}g\]
for $| \mu | =1$,  tend to limits of the form $(*, *, 0) \in \bC^{3}$, where ``$*$'' means some finite value,  and at least one of the two complex numbers $*$ is different from 0. This implies that all the limits of tangents to fibres of $f\bar g$ contain the $z$-axis.

At some point $(x,y,z)\in \bC^{3}$ our generating family of normal vectors to the fibre takes the form:
\[ n_{\mu} = \mu (2\bar x y, -2y\bar y \bar z, - \bar y^2 y) + \bar \mu (0, x^2 - zy^2, 0).
\]
We consider a sequence of points $(x,y,z) \to (0,0, z_{0})$, where  $z_{0}\not= 0$.  Let $y\not=0$ (since in the case $y\equiv 0$ the computation is trivial).  We are interested in the directions of the vectors and not in their size, therefore we may divide by any real number. Therefore, dividing by $|y^2|$, we get:
\begin{equation}\label{eq:n}
 \mu (2\bar x/ \bar y, -2 \bar z, - \bar y) + \bar \mu (0, x^2/|y|^2 - z y^2/|y|^2, 0)
\end{equation}
 
 There are two cases: either $|x/y|$ is bounded or  tends to infinity. In the former,  the limit of  \eqref{eq:n}  is of the form $(*,*, 0)$. In the later, 
the limit of  \eqref{eq:n} is of the form $(0, *, 0)$ where ``$*$''  is finite and different from 0. When we say ``the limit'' we mean all the limits for any $\mu$ with $|\mu| =1$.
 Thus our claim is proved.
\end{example}

\section{Milnor tube fibration with Thom regularity}\label{MilnorwithThom}

 The existence of a Thom $(a_{G})$-stratification of $\Sing G\subset G^{-1}(0)$ implies the existence of the Milnor tube fibration, since the Thom regularity insures the transversality to arbitrarily small spheres
of the fibres of $G$ in some small enough tube, except of the fibre over $0$.
 We derive from Theorem \ref{t:mainthom1} results which enable us to construct classes of
 functions $f\bar g$ with Thom regularity.

\begin{corollary}\label{c:tubefib}\label{t:mainthom2}
Assume that $\Disc(f, g)$ contains only curves tangent to the coordinate axes.
If the map $(f, g)$  is Thom regular, then $f\bar g$ is Thom regular.

In particular, if $(f, g)$ is in addition an ICIS, then $f\bar g$ is Thom regular.

\end{corollary}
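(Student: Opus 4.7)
The plan is to derive this corollary as an essentially mechanical consequence of Theorem~\ref{t:mainthom1}, the serious technical work having already been done there. First I would translate the hypothesis ``$(f,g)$ is Thom regular'' into the set-theoretic statement $NT_{(f,g)} \subset \{0\}$: by definition, Thom regularity of $(f,g)$ provides an $(a_{(f,g)})$-regular stratification of $(f,g)^{-1}(0)$ in which every point other than (at worst) the origin already lies on a positive-dimensional stratum, so such a point is not in $NT_{(f,g)}$. Combined with the standing hypothesis that $\Disc(f,g)$ contains no lines other than the axes, Theorem~\ref{t:mainthom1} then yields
\[
NT_{f\bar g}\ \subset\ NT_{(f,g)}\ \subset\ \{0\}.
\]

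Next I would upgrade this set-theoretic control to the existence of a genuine global $(a_{f\bar g})$-regular stratification of $(f\bar g)^{-1}(0)$, which is what ``$f\bar g$ is Thom regular'' means. By Lemma~\ref{l:sing1}(b) the critical locus decomposes as $\Sing f\bar g = \{f=g=0\} \cup \Sing f \cup \Sing g$, and on its complement $f\bar g$ is a submersion, so any Whitney stratification there is automatically $(a_{f\bar g})$-regular. Along each analytic piece of $\Sing f\bar g$ outside the origin, the inclusion $NT_{f\bar g} \subset \{0\}$ provides local Thom stratifications, and the standard common-refinement procedure for Whitney stratifications in the complex analytic category assembles them into one global $(a_{f\bar g})$-regular stratification.

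For the ``in particular'' assertion I would invoke the classical fact that any ICIS is Thom regular: since $(f,g)^{-1}(0)$ has an isolated singularity at the origin, the two-stratum decomposition $\bigl\{(f,g)^{-1}(0)\setminus\{0\},\{0\}\bigr\}$ is Whitney, and because $(f,g)$ restricts to a submersion along its big smooth stratum the $(a_{(f,g)})$ condition is tautological there. The first part of the corollary then applies directly and delivers the conclusion.

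The only step that is not purely formal — the ``main obstacle,'' such as it is — is the passage in the second paragraph from the pointwise inclusion $NT_{f\bar g} \subset \{0\}$ to the existence of a single global $(a_{f\bar g})$-regular stratification. This is a small gluing issue rather than a substantive one, and I would dispose of it either by citing the Henry--Merle--Sabbah framework \cite{HMS} that underlies the whole section, or by a short explicit argument that refines compatible local stratifications of $\{f=g=0\}$, $\Sing f$ and $\Sing g$ into one Whitney stratification and then checks the $(a_{f\bar g})$-condition stratum by stratum.
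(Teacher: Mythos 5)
Your proof is correct and follows essentially the same route as the paper: both parts are read off from the inclusion $NT_{f\bar g}\subset NT_{(f,g)}$ of Theorem~\ref{t:mainthom1}, and your attention to the gluing step (passing from $NT_{f\bar g}\subset\{0\}$ to an actual global Thom stratification, via \cite{HMS}) is a reasonable elaboration of what the paper leaves implicit. The only cosmetic difference is in the ICIS case, where you first argue that an ICIS is Thom regular and then invoke the first assertion, while the paper instead uses the definitional bound $NT_{(f,g)}\subset\{f=g=0\}\cap\Sing(f,g)$, which is $\subset\{0\}$ for an ICIS; the two observations are equivalent.
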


\begin{proof}
The hypothesis is just the expression of the condition $\Disc(f\bar g)\subset \{0\}$ provided by Theorem \ref{t:main1}.
The first assertion follows directly from Theorem \ref{t:mainthom1} applied to  this special case.

The inclusion $NT_{(f,g)}\subset \{ f=g=0\} \cap \Sing (f,g)$ follows directly from the definition. In the ICIS 
case we have  $\{ f=g=0\} \cap \Sing (f,g) \subset \{ 0\}$.
 From this and Theorem \ref{t:mainthom1} we get the inclusion
$ NT_{f\bar g} \subset\{ 0\}$, thus the second claim follows too.
\end{proof}

\begin{corollary}[Thom-Sebastiani type]\label{p:thomseb}
Let  $f :(\bC^m, 0) \to (\bC, 0)$ and  $g :(\bC^p, 0) \to (\bC, 0)$ be holomorphic germs in separate variables. Then $f\bar g$ is Thom regular. 
\end{corollary}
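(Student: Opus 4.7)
The plan is to apply Corollary \ref{c:tubefib}, which reduces the problem to verifying two hypotheses: that $\Disc(f,g)$ contains no lines other than the coordinate axes of $\bC^{2}$, and that the holomorphic map $(f,g) : \bC^{m}\times \bC^{p}\to \bC\times \bC$ is Thom regular.

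For the discriminant, the separate-variables structure makes the Jacobian of $(f,g)$ block-diagonal, so $\Sing(f,g) = (\Sing f \times \bC^{p}) \cup (\bC^{m}\times \Sing g)$. The image $f(\Sing f)$ is an analytic germ in $(\bC,0)$ by Remmert's proper mapping theorem, and Sard's theorem rules out its being $1$-dimensional, so $f(\Sing f) = \{0\}$ as a germ; likewise $g(\Sing g) = \{0\}$. Hence $\Disc(f,g) \subset (\{0\}\times \bC)\cup (\bC\times \{0\})$, which yields the first hypothesis.

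For the Thom regularity of $(f,g)$, I would take Thom $(a_{f})$- and $(a_{g})$-regular complex analytic Whitney stratifications $\{S_{\alpha}\}$ of $f^{-1}(0)$ and $\{R_{\beta}\}$ of $g^{-1}(0)$ (which exist by Hironaka), and stratify $(f,g)^{-1}(0) = f^{-1}(0)\times g^{-1}(0)$ by the products $\{S_{\alpha}\times R_{\beta}\}$, known to be Whitney. For a sequence $(x_{n}, y_{n})$ of regular points of $(f,g)$ converging to $(x_{0}, y_{0}) \in S_{\alpha}\times R_{\beta}$, the tangent space to the fiber factors as $T_{x_{n}} f^{-1}(f(x_{n})) \oplus T_{y_{n}} g^{-1}(g(y_{n}))$ inside the complementary coordinate subspaces $T\bC^{m}\oplus 0$ and $0\oplus T\bC^{p}$.

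The key point I expect to check carefully is that this direct-sum decomposition is preserved under Grassmannian limits, which follows because the projections to each coordinate factor are continuous and the two summands remain confined to fixed complementary subspaces. Combined with the Thom conditions for $f$ and $g$ separately, this yields that the limit contains $T_{x_{0}} S_{\alpha} \oplus T_{y_{0}} R_{\beta} = T_{(x_{0},y_{0})}(S_{\alpha}\times R_{\beta})$, establishing the Thom regularity of $(f,g)$. With both hypotheses of Corollary \ref{c:tubefib} in hand, the conclusion that $f\bar g$ is Thom regular follows. The only delicate step is the interchange of Grassmannian limits with the direct-sum decomposition arising from the separate-variables structure; everything else is a direct application of Hironaka, Sard, and Remmert together with the standard product construction of Whitney stratifications.
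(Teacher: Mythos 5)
Your proposal is correct and follows essentially the same route as the paper: reduce via Theorem \ref{t:mainthom1} (equivalently Corollary \ref{c:tubefib}) after noting that $\Disc(f,g)$ lies in the union of the axes, and show that the product of Thom $(a_f)$- and $(a_g)$-stratifications of $Z(f)$ and $Z(g)$ is Thom $(a_{(f,g)})$-regular using the splitting of fibres and tangent spaces coming from the separate variables. The only (minor) point where the paper says slightly more is that it also records the Thom condition along limits of tangents to the non-smooth fibres of $(f,g)$ lying over the two axes, not just along sequences of regular points; your product argument handles those cases verbatim (using Whitney $(a)$ in the degenerate factor), so nothing essential is missing.
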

\begin{proof} 
Note that $\Disc f\bar g =\{ 0\}$, since the discriminant $\Disc(f,g)$ is the union of the two coordinate axes and we may apply Theorem \ref{t:main1}.
The fibre $(f,g)^{-1}(0,0)$ is a product $Z(f)\times Z(g)$   of the corresponding zero loci of $f$ and of $g$, in separate variables.
All fibres of $(f,g)$ are also products of fibres in separate variables. 
Both holomorphic functions $f$ and $g$ have Thom stratifications of their zero loci and we fix two such stratifications, respectively.
We claim that the product of the two Thom stratifications  is a  Thom (a$_{(f,g)}$)-regular stratification of  $Z(f)\times Z(g)$.
Indeed one considers the regularity conditions for limits of tangents to smooth fibres (over the complement of the axes) and for limits 
of tangents to fibres over each of the two axes. One can check easily that in every case the limits contains the tangent space to the stratum at 
the point of $Z(f)\times Z(g)$, which proves out claim.

Finaly we apply Theorem \ref{t:mainthom1} to conclude that $f\bar g$ is Thom regular. 
\end{proof}


\smallskip
A concrete class of functions $f\bar g$ which are Thom regular, with nonisolated singularity and isolated critical value
is obtained as follows: let $(f,g)$ be any couple of holomorphic functions  in separate variables. 
We have $\Sing (f,g) = \Sing f \times \bC^{p} \bigcup \bC^{m}\times  \Sing g \subset V$,  hence the map germ $(f,g)$ is not an ICIS.   Lemma \ref{l:sing1}(a) implies that $0$ is an isolated singular value of $f\bar g$. Lemma \ref{l:sing1}(b) tells that $\Sing f\bar g$ is nonisolated
as soon as $m>1$ or $p>1$. Then Corollary \ref {p:thomseb} shows that $f\bar g$ is Thom regular.


\section{Milnor tube without Thom regularity}\label{noThom}

In \cite{ACT} it is explained that the Thom (a$_{f\bar g}$)-regularity condition along $V = (f\bar g)^{-1}(0)$ is not necessary for the existence of an open book structure and shows examples which have an $S^{1}$-action.

Namely, one considers weighted homogeneous complex polynomials, a class of singularities which has been deeply studied. They  have a  $\bC^{*}$-action.  As  $\bC^{*}= S^{1}\times \bR_{+}$, this falls into two actions in case of mixed maps. The $S^{1}$-action defines an interesting class of singularities for which several results have been proved  \cite{Oka1}, \cite{CM}, \cite{ACT}, \cite{CT} etc.

 \begin{definition}\label{d:polar} (cf \cite{CM}, \cite{Oka1}) 
 The mixed function $F$ is called \emph{polar} weighted-homogeneous if there are non-zero integers 
$p_{1},\ldots,p_{n}$ and $k$, such that $\gcd(p_{1},\ldots p_{n})=1$ and $\sum_{j=1}^{n}p_{j}(\nu_{j}-\mu_{j})=k$, for any monomial of the expansion \eqref{eq:mixed}.

The corresponding  $S^{1}$-action on $\mathbb{C}^{n}$ is for $\lambda\in S^{1}$: 
\[ 
\lambda\cdot(\mathbf{z},\mathbf{\overline{z}})  =(\lambda^{p_{1}}z_{1},\ldots,\lambda^{p_{n}}z_{n},\lambda^{-p_{1}}\overline{z}_{1},\ldots,\lambda^{-p_{n}}\overline{z}_{n}).
\] 
\end{definition}

   We have the following result:

\begin{theorem}\label{t:homogen} 
Let $F:(\mathbb{C}^{n},0) \rightarrow (\mathbb{C}, 0)$, $n\ge 2$, be a non-constant mixed germ which is polar weighted-homogeneous.
 Then $F$  has  Milnor tube fibration \eqref{eq:tube}.
\end{theorem}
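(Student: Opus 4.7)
The plan is to exploit the $S^{1}$-equivariance provided by polar weighted-homogeneity: under the action $\lambda\cdot z := (\lambda^{p_{1}}z_{1},\ldots,\lambda^{p_{n}}z_{n})$ on $\bC^{n}$, the defining relation reads $F(\lambda\cdot z,\overline{\lambda\cdot z})=\lambda^{k}F(z,\bar z)$ with $k\neq 0$. Differentiating in $\lambda$ at $\lambda=1$ yields the Euler-type identity
\[
\sum_{j=1}^{n} p_{j}z_{j}\,\frac{\partial F}{\partial z_{j}}(z) - \sum_{j=1}^{n} p_{j}\bar z_{j}\,\frac{\partial F}{\partial\bar z_{j}}(z) = k\,F(z).
\]

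First I would establish the inclusion $\Sing F\subset F^{-1}(0)$ in a neighborhood of $0$. Set $A(z):=\sum_{j}p_{j}z_{j}\frac{\partial F}{\partial z_{j}}(z)$ and $B(z):=\sum_{j}p_{j}\bar z_{j}\frac{\partial F}{\partial\bar z_{j}}(z)$. For $z_{0}\in\Sing F$, the critical relation \eqref{eq:singmixtes} $\overline{\d F}(z_{0})=\lambda\,\overline{\d}F(z_{0})$ for some $|\lambda|=1$ gives by direct substitution $\bar A(z_{0})=\lambda B(z_{0})$ and $\bar B(z_{0})=\lambda A(z_{0})$; together with the Euler identity $A-B=kF$ and its conjugate this forces $\bar F(z_{0})=-\lambda F(z_{0})$, so $\lambda=-\overline{F(z_{0})}/F(z_{0})$ whenever $F(z_{0})\neq 0$. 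Substituting this value of $\lambda$ into $\frac{\partial |F|^{2}}{\partial z_{j}}=(\frac{\partial F}{\partial z_{j}})\bar F+F\,\overline{\frac{\partial F}{\partial\bar z_{j}}}$ makes $\nabla|F|^{2}(z_{0})=0$, so $\Sing F\cap\{F\neq 0\}\subset\Sing(|F|^{2})\cap\{|F|^{2}>0\}$. The latter set does not accumulate at $0$: otherwise the Curve Selection Lemma would produce a real analytic arc $z(t)\to 0$ with $\nabla|F|^{2}(z(t))=0$ and $|F(z(t))|^{2}>0$ for $t>0$, whence $\frac{d}{dt}|F(z(t))|^{2}=\nabla|F|^{2}(z(t))\cdot z'(t)=0$ forces $|F|^{2}$ constant along the arc, necessarily equal to $|F(0)|^{2}=0$ by continuity, contradicting $|F(z(t))|^{2}>0$.

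Next I would show that $S_{\epsilon}$ is transverse to every fibre $F^{-1}(w)$, $w\in D_{\eta}\setminus\{0\}$, provided $\epsilon$ is small enough and $\eta\ll\epsilon$. By Lemma \ref{l:M(f)geometric}, non-transversality at $z_{0}$ is equivalent to $z_{0}$ lying in the real $2$-plane spanned by $\{n_{\mu}(z_{0})=\mu\,\overline{\d F}(z_{0})+\bar\mu\,\overline{\d}F(z_{0}):\mu\in\bC\}$. If non-transverse points accumulated at $0$ with $F\neq 0$, the Curve Selection Lemma would yield an analytic arc $z(t)\to 0$ and an analytic $\mu(t)\in\bC$ with $z(t)=n_{\mu(t)}(z(t))$. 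Taking the Hermitian pairing of this relation with $(p_{j}\bar z_{j})_{j}$ gives $\sum_{j}p_{j}|z_{j}|^{2}=\mu(t)\bar A(z(t))+\overline{\mu(t)}\,B(z(t))$; feeding this into the Euler identity along the arc and its conjugate produces a \L{}ojasiewicz-type estimate forcing $|F(z(t))|$ to vanish faster than any power of $\|z(t)\|$, so $F(z(t))\notin D_{\eta}^{*}$ for $\eta$ small. Combining the two steps, $F|_{\bar T_{\epsilon,\eta}}$, where $T_{\epsilon,\eta}:=\bar B_{\epsilon}\cap F^{-1}(D_{\eta}^{*})$, is a proper stratified submersion whose boundary stratum $S_{\epsilon}\cap F^{-1}(D_{\eta}^{*})$ is transverse to the fibres; Ehresmann's theorem for manifolds with boundary (equivalently Thom's first isotopy lemma) gives the locally trivial fibration. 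Topological independence from $(\epsilon,\eta)$ within the admissible range follows from a standard fibrewise-isotopy: for two admissible small pairs, the inclusion $T_{\epsilon',\eta'}\hookrightarrow T_{\epsilon,\eta}$ is a fibre-preserving diffeomorphism up to an isotopy obtained by integrating a controlled stratified vector field compatible with the $S^{1}$-equivariance.

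The main obstacle I expect is the transversality of small spheres: unlike in the holomorphic or radially weighted setting, polar weighted-homogeneity does not provide an $\bR_{+}$-scaling, so the \L{}ojasiewicz estimate must be extracted from the Euler identity in combination with the $S^{1}$-equivariance and the precise form of the weights $(p_{1},\ldots,p_{n},k)$. The first step reduces cleanly through the $|F|^{2}$-trick to a general feature of real analytic functions vanishing at $0$, and the concluding Ehresmann-type step is standard once the analytic ingredients of the first two are in place.
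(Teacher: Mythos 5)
Your first step is correct and in fact takes a different route from the paper's: you derive $\Sing F\subset F^{-1}(0)$ from the polar Euler identity, showing that a critical point of $F$ with $F\neq 0$ is a critical point of $|F|^{2}$ and then applying the Curve Selection Lemma to $|F|^{2}$, whereas the paper rotates the (subanalytic, at most one-dimensional) set of critical values by the induced action $w\mapsto\lambda^{k}w$ and concludes that it reduces to $\{0\}$. Both arguments are sound for condition (a).

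The gap is in your second step, the transversality of the fibres to small spheres, which is the real content of the theorem. There are two problems. First, you control the wrong accumulation set: what is needed is that the $\rho$-nonregular set $M(F)\setminus V$ does not accumulate on $V\setminus\{0\}$, i.e.\ $\overline{M(F)\setminus V}\cap V=\{0\}$. A bad sequence for the tube lives on a \emph{fixed} sphere $S_{\epsilon}$, has $F\neq 0$ with $F\to 0$ in value, and converges (by compactness of $S_{\epsilon}$) to a point of $V\cap S_{\epsilon}$ at distance $\epsilon$ from the origin --- not to the origin. Your Curve Selection Lemma arc $z(t)\to 0$ therefore addresses a limit point that is irrelevant; indeed the required condition explicitly \emph{permits} $M(F)\setminus V$ to accumulate at $0$, and for most $F$ it does. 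Second, even for the statement you do target, the key estimate is only asserted: ``feeding this into the Euler identity \dots\ produces a \L{}ojasiewicz-type estimate forcing $|F(z(t))|$ to vanish faster than any power'' is not derived, and it is not clear how pairing $z=n_{\mu}$ against $(p_{j}z_{j})_{j}$ yields any such decay. The paper's mechanism for this step is different and genuinely uses the $S^{1}$-action \emph{on the sphere}: since $|\lambda^{p_{j}}z_{j}|=|z_{j}|$ for $|\lambda|=1$, the action preserves $S_{\epsilon}$, so the set of critical values of $F_{|S_{\epsilon}}$ is invariant under $w\mapsto\lambda^{k}w$; being subanalytic and of measure zero, it cannot contain a full circle of arbitrarily small radius accumulating at $0$, hence all values in a small punctured disk are regular values of $F_{|S_{\epsilon}}$, which is exactly the transversality needed. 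You would have to replace your second step by an argument of this kind, or by a genuine derivation of an estimate valid at limit points on $V\cap S_{\epsilon}$.
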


\begin{proof} \sloppy
The proof is almost contained in \cite{ACT}. We emphasize here the main lines.

The existence of  the Milnor tube fibration \eqref{eq:tube} for $F$, where $V:= F^{-1}(0)$,  is implied by the following conditions (see e.g. \cite[Proof of Theorem 2.1]{ACT}):  

\begin{enumerate}
\item $\Sing F \subset V$, and
 \item  $\overline{M(F)\m V} \cap  V = \{ 0\}$,
 \end{enumerate}
where $M(F) := \{ x\in U \mid \rho \not\pitchfork_x F\}$,  $\rho : U \to \bR_{\ge 0}$ is the Euclidean distance and 
$U\subset \bC$  is an open neighborhood of  the origin.
The subset $M(F)$ is the set of non-transversality between $\rho$ and $F$, is called the set of \textit{$\rho$-nonregular points}
 and is also known as \textit{Milnor set}\footnote{the ``$\rho$-regularity'' has been introduced in \cite{Ti-cras} and the ``Milnor set'' appears in \cite{NZ}.}.
 
Let us assume that conditions (a) and (b) are fulfilled and that the image of $F$ contains a neighbourhood of the origin of $\bC$. 

Since condition (b)  is equivalent to the transversality of the fibres of $F$ to the boundary $S^{2n-1}_\e = \partial \bar B^{2n}_{\e}$ in some small enough neighborhood of $V$ depending on $\e$,  it follows that the restriction of $F$ to the tube,  as given in  Definition \ref{d:tube} is proper.
Condition (a) implies that $F$ is a submersion outside $V$, i.e. that $0$ is an isolated critical value of $F$.

Then we may apply  Ehresmann's theorem \cite{Eh, Wo} for manifolds with boundary to conclude to the existence of the locally trivial fibration (\ref{eq:tube}).

\medskip

Let us now prove  conditions (a) and (b) for our $F$.

We first prove that $\im F$ contains a small enough disk at $0 \in \bC$. 
 Since  $F\not\equiv 0$ by hypothesis,  and since $\im F$ is a semi-analytic set germ, by the Curve Selection Lemma, $\im F$ contains a curve $\gamma$ which intersects the circles $S^1_\eta \subset \bC$ for any small enough radius $\eta>0$. 
Take now some $a\in S^1_\eta \cap \gamma$ and $z\in F^{-1}(a)$. Since $F(\lambda \cdot (\mathbf{z}, \mathbf{\overline{z}})) = \lambda^k F(\mathbf{z}, \mathbf{\overline{z}})$, we have $\lambda^k a \in \im F$ for any $\lambda \in S^1$. This shows that $\im F$ actually contains a disk $D^2_{\eta_0}$, for some small enough $\eta_0>0$.

The germ at $0$ of the set of critical values of $F$ is a semi-analytic set of dimension $\le 1$. Take its complement in $\im F$, which is a 2 dimensional semi-analytic set germ at $0$. Applying the above reasoning yields that all values different from $0$ are regular, hence their inverse images are manifolds of dimension $2n-2\ge 1$. This proves (a).

Take now the restriction of $F$ to some small enough sphere $S_\e^{m-1}$. Its image must contain a non-constant curve germ at $0$. Since the $S^1$-action preserves the sphere, by the same reasoning as above, the image of $F_{|S_\e^{m-1}}$ contains some disk $D^2_{\eta_0}$. The regular values of $F_{|S_\e^{m-1}}$ are a dense semi-analytic set and if $a$ is a regular value then $\lambda^k a$ is regular too, for any $\lambda \in S^1$. Hence all values of the pointed disk $D^2_{\eta_0}\m \{ 0\}$ are regular.   Since this holds for any small enough $\e >0$,
we have proved (b).
\end{proof}


\subsection{Classes of Milnor tube fibration without Thom regularity}\label{ss:noThom} \

The class of  mixed
 functions $F_{k} := \bar x y(x+z^{k})$, $k\ge 2$, is with isolated critical value, nonisolated singularity and has  Milnor tube fibration but not Thom regularity. This extends Example \ref{ex:milnortube}. 
 
 Indeed, remark first that  $f\bar g$ is polar homogeneous (cf Definition \ref{d:polar}), hence  it has isolated critical value and Milnor tube fibration due to Theorem \ref{t:homogen}. The singular set contains the $y$-axis and the $z$-axis.
 
 Let us see that $F_{k}$ does not have Thom regularity at $V$.  
 The family of normal vectors at some point $(x,y,z)$ is:
 \[  n_{\mu} = \mu (x \bar y,  x \bar x + x \bar z^{k},  kx\bar z^{k-1}\bar y) + \bar \mu ( y(x+z^{k}), 0 , 0)
\]
for any $\mu$ with $|\mu| =1$. We check the Thom regularity at some point $(0, y_{0}, 0)$ of the $y$-axis.
Let us point out a general fact  which simplifies the computations:  the $S^{1}$-action on  $F_{k}$ implies that the Thom strata of  $V$ are $S^{1}$-invariant. Thus the  $y$-axis minus the origin must be a single stratum. 

Therefore, without loss of generality we may choose $y_{0}\in \bR\m \{ 0\}$. We also choose 
a particular sequence of points of the form $(x,y,0)$ tending to $(0, y_{0}, 0)$.  Then:
 \[  n_{\mu} = \mu (x \bar y,  x \bar x ,  0) + \bar \mu ( yx, 0 , 0) 
\]
and now choose $\mu = i$. We get:
 \[  n_{\mu} = (i x( \bar y - y),  i x \bar x ,  0)  = (0,  i x \bar x ,  0).
\]
The limit is of the form $(0, * , 0)$, but if  there were Thom regularity at the $y$-axis then all the limits should be of the form $(*, 0, *)$. This gives a contradiction.


\medskip
\begin{remark}\label{r:3var}
 Fernandes de Bobadilla and Menegon Neto  state in \cite[page 154]{FM} that the example
 $f\bar g(x,y,z)= (x+z^2)y\bar x$ does not satisfy their criterion \cite[Theorem 1.1(i)]{FM} for the 3-variables functions of type  $f\bar g$   to have  Milnor tube fibration.  Nevertheless  our Theorem \ref {t:homogen} shows that this example  has Milnor tube fibration, and the above computation shows that it is not Thom regular. Definitely,  there  is a contradiction here.  
It may come from the
  different definition of \emph{Milnor tube} that the authors use  in \cite{FM}, with polydisk neighbourhoods instead of the usual ball neighbourhoods like in our Definition \ref{d:tube}.   We leave open this interesting problem.

\end{remark}


\end{document}